\newtheorem{theorem}{Theorem}[section]
\newtheorem{conjecture}[theorem]{Conjecture}
\newtheorem{fact}[theorem]{Fact}
\newtheorem{problem}[theorem]{Problem}
\newtheorem{observation}[theorem]{Observation}
\newtheorem{definition}[theorem]{Definition}
\newcommand{\ep}{\varepsilon}
\newcommand{\floor}[1]{\lfloor#1\rfloor}
\newcommand{\ceiling}[1]{\lceil#1\rceil}
\newcommand{\mc}{\mathrm{mc}}
\newcommand{\tbf}[1]{\textbf{#1}}
\setlist{itemsep=0pt, topsep=2pt}
\title{A note about monochromatic components in graphs of large minimum degree}
\author{Louis DeBiasio$^{1}$, Robert A. Krueger$^{2}$}
\date{\today}
\begin{document}

\maketitle
\noindent\footnotetext[1]{Department of Mathematics, Miami University {\tt debiasld@miamioh.edu}.}
\noindent\footnotetext[2]{Department of Mathematics, University of Illinois, Urbana-Champaign {\tt rak5@illinois.edu}.}

\begin{abstract}
For all positive integers $r\geq 3$ and $n$ such that $r^2-r$ divides $n$ and an affine plane of order $r$ exists, we construct an $r$-edge colored graph with minimum degree $(1-\frac{r-2}{r^2-r})n-2$ such that the largest monochromatic component has order less than $\frac{n}{r-1}$.  This generalizes an example of Guggiari and Scott and, independently, Rahimi for $r=3$ and thus disproves a conjecture of Gy\'arf\'as and S\'ark\"ozy for all integers $r\geq 3$ such that an affine plane of order $r$ exists.
\end{abstract}

\section{Introduction}

An \emph{affine plane of order $q$} is a $q$-uniform hypergraph on $q^2$ vertices (called points), with $q(q+1)$ edges (called lines) such that each pair of vertices is contained in exactly one edge.  It is well known that an affine plane of order $q$ exists whenever $q$ is a prime power (and it is unknown whether there exists an affine plane of non-prime power order).  Given an affine plane $\mathcal{G}$ of order $q$, there exists a $q+1$-coloring of the edges of $\mathcal{G}$ such that every color class (called a parallel class) consists of a collection of $q$ vertex disjoint edges of order $q$, every vertex is contained in exactly one edge of each color, and the union of the $q+1$ edges incident to a given vertex is all of $V(\mathcal{G})$.

Let $H=(\{x_1, \dots, x_t\},E)$ be a hypergraph which has a proper edge coloring with $r$ colors (that is, every color class induces a matching).  Let $\alpha = (\alpha_1, \dots, \alpha_t)\in \mathbb{R}^t$ be such that $\sum_{i=1}^t\alpha_i=1$ and $\alpha_i>0$ for all $i\in [t]$.  For a positive integer $n$, let $G$ be a graph on $n$ vertices obtained by replacing each $x_i\in V(H)$ with a set $X_i$ of order $\ceiling{\alpha_i n}$ or $\floor{\alpha_i n}$; for all $u\in X_i$, $v\in X_j$, let $uv$ be an edge of $G$ if and only if there exists $e\in E$ such that $\{x_i, x_j\}\subseteq e$, and color $uv$ using the color which appears on $e$ (if there are multiple such edges, choose a color arbitrarily from one such edge).  We call $G$ an $\alpha$-\emph{weighted blow-up} of $H$, and if $\alpha_i=\frac{1}{t}$ for all $i\in [t]$, we call $G$ a \emph{uniform blow-up} of $H$.

Given a graph $G$ and a positive integer $r$, let $\mc_r(G)$ be the largest integer $m$ such that in every $r$-edge-coloring of $G$, there exists a monochromatic component (i.e.\ a maximal connected subgraph) of order at least $m$.  For the rest of the paper, when we speak of an $r$-coloring of $G$, we mean an $r$-coloring of the edges of $G$.  

Gy\'arf\'as \cite{Gy} proved $$\mc_r(K_n)\geq \frac{n}{r-1}$$ and this is best possible when $(r-1)^2$ divides $n$ and an affine plane of order $r-1$ exists.  To see this, let $K_n$ be a uniform blow-up of the affine plane of order $r-1$.  Since every pair of distinct points from the affine plane is contained in exactly one edge the $r$-coloring of $K_n$ is well defined, and since each line of the affine plane has order $r-1$ and there are $(r-1)^2$ points, the size of the largest monochromatic component in $K_n$ is $(r-1)\frac{n}{(r-1)^2}=\frac{n}{r-1}$.

Gy\'arf\'as and S\'ark\"ozy \cite{GyS} raised the following interesting question: for a graph $G$ on $n$ vertices, how large does the minimum degree of $G$ need to be so that $\mc_r(G)\geq\frac{n}{r-1}$?  As noted in \cite{GyS0}, the answer is $n-1$ for $r=2$ because there is a 2-coloring of any non-complete graph on $n$ vertices such that the largest monochromatic component has order at most $n-1$.  So it was perhaps surprising that for all $r\geq 3$, they showed there exists $\ep_r>0$ such that if $G$ is a graph on $n$ vertices with $n$ sufficiently large and $\delta(G)\geq(1-\ep_r)n$, then $\mc_r(G)\geq \frac{n}{r-1}$.  The bounds on $\ep_r$ given in \cite{GyS} were later improved in \cite{DKS} as follows: for $r=3$, $\delta(G)\geq 7n/8$ suffices and for $r\geq 4$, $\delta(G)\geq (1-\frac{1}{3072(r-1)^5})n$ suffices.

Gy\'arf\'as and S\'ark\"ozy \cite{GyS} also gave the following natural construction whenever an affine plane of order $r$ exists and $r^2$ divides $n$.  Repeat the construction given above, but instead of an affine plane of order $r-1$, take a uniform blow-up of an affine plane of order $r$ with one parallel class removed.  This gives an $r$-colored graph on $n$ vertices with minimum degree $(1-\frac{r-1}{r^2})n-1$ where the largest monochromatic component has order $\frac{n}{r}<\frac{n}{r-1}$.  They conjectured that the bound arising from this construction is tight.

\begin{conjecture}[Gy\'arf\'as, S\'ark\"ozy \cite{GyS}]\label{exGS}
Let $n$ and $r\geq 3$ be positive integers.  If $G$ is a graph on $n$ vertices such that $\delta(G) \geq (1-\frac{r-1}{r^2})n$, then $\mc_r(G)\geq \frac{n}{r-1}$.
\end{conjecture}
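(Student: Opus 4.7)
Given that the abstract announces a counterexample to Conjecture \ref{exGS}, the task is really to \emph{disprove} it. The natural approach to proving the conjecture would be to build on the asymptotic results of \cite{GyS} and \cite{DKS}. But before attempting that, one should look hard at the Gy\'arf\'as-S\'ark\"ozy construction itself: it achieves maximum monochromatic component exactly $\frac{n}{r}$, strictly below the target $\frac{n}{r-1}$. This ``slack'' between $\frac{n}{r}$ and $\frac{n}{r-1}$ strongly suggests that the conjectured minimum-degree threshold can in fact be beaten --- that is, that the conjecture should fail whenever the underlying affine-plane combinatorics allows it.

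My plan is therefore to construct a counterexample using the weighted blow-up framework of the introduction. Concretely, I look for a hypergraph $H$ on $t$ points with a proper $r$-edge coloring and positive weights $\alpha=(\alpha_1,\dots,\alpha_t)$ summing to $1$ such that: (i) every hyperedge $e$ of $H$ satisfies $\sum_{x_i\in e}\alpha_i<\frac{1}{r-1}$ (so that $\mc_r(G)<\frac{n}{r-1}$), and (ii) every vertex $x_i$ satisfies $\sum_{x_j\in N_H[x_i]}\alpha_j > 1-\frac{r-1}{r^2}$, where point classes are treated as cliques in the blow-up (as in the Gy\'arf\'as-S\'ark\"ozy example, so that condition (ii) beats the conjectured minimum degree). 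For $r=3$ the construction of Guggiari-Scott and, independently, Rahimi already provides such an $H$ on $6$ points, so the main task is to generalize this to all $r\geq 3$ for which an affine plane of order $r$ exists.

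The principal obstacle I foresee is that the affine plane of order $r$ itself does \emph{not} admit such weights: a direct linear-programming argument (using that the $r$ lines in the dropped parallel class partition the points, so their weights sum to $1$ and at least one must be $\geq \tfrac{1}{r}$) shows that any weighting of the Gy\'arf\'as-S\'ark\"ozy hypergraph tops out at exactly the threshold $1-\frac{r-1}{r^2}$. A genuinely different hypergraph is therefore needed. The divisibility hypothesis $r^2-r\mid n$ in the abstract is a strong hint that $H$ should have $r(r-1)=r^2-r$ point classes --- naturally obtained, for instance, by deleting a line from a chosen parallel class of the affine plane of order $r$ and then reorganizing the remaining lines (which after the deletion have sizes $r$ or $r-1$) into a proper $r$-edge coloring of matchings. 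The ``$-2$'' correction in the advertised min-degree bound then hints at a small non-uniform perturbation of an essentially uniform weighting, just enough to convert the borderline equality $\sum_{x_i\in e}\alpha_i=\frac{1}{r-1}$ that one typically meets in such symmetric constructions into the strict inequality needed in (i). Verifying the existence of the correct proper $r$-edge coloring of the reorganized hypergraph, and pinning down the weights, will be the combinatorial heart of the argument.
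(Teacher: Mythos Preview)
Your overall plan matches the paper's: construct a hypergraph on $r^2-r$ points with a proper $r$-edge coloring, take a nearly uniform weighted blow-up, and then perturb the weights so that every hyperedge has weight strictly below $\frac{1}{r-1}$. The gap is in the specific construction you propose. Obtaining the $r^2-r$ points by deleting the vertices of a \emph{line} of the affine plane is exactly the natural first guess that the paper singles out as \emph{failing}: see the hypergraph $\mathcal{H}_3'$ discussed just after properties \ref{itm:P1}--\ref{itm:P3}. If one deletes a parallel class of edges and then the vertices of a line, the edges of maximum size in the resulting hypergraph admit a perfect fractional matching, so by Theorem~\ref{perturbable} no perturbation exists and every weighting leaves some edge of weight at least $\frac{1}{r-1}$. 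Your alternative of keeping all $r+1$ parallel classes and ``reorganizing'' into $r$ matchings does not work either: lines from distinct parallel classes still meet after the deletion, so the surviving lines of the class containing the deleted line cannot be absorbed into the other $r$ classes.

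The idea you are missing is that the deleted vertex set $S$ must \emph{not} be a line. The paper first removes an entire parallel class $L_{r+1}$ of edges (so the remaining $r$ classes already give the proper $r$-coloring, with no reorganizing needed), and then deletes a set $S$ obtained from a line of $L_1$ by swapping one of its points for the adjacent point in the same $L_{r+1}$-column. This single twist is precisely what breaks the perfect-fractional-matching obstruction on the top level (Observation~\ref{rough}, verified via an explicit fractional vertex cover of weight $<\frac{|V|}{r}$), after which the concrete perturbation in Theorem~\ref{planeminus} yields the advertised counterexample.
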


Recently, Guggiari and Scott, and independently Rahimi, disproved this conjecture for $r=3$. The combination of their results gives the best possible minimum degree condition.

\begin{theorem}[Guggiari, Scott \cite{GS}, Rahimi \cite{R}]\label{GuggiariScott}
Let $G$ be a graph on $n$ vertices.  If $\delta(G) \geq \frac{5}{6} n-1$, then $\mc_3(G)\geq \frac{n}{2}$. Moreover, for every $n$, there exists a graph $G$ on $n$ vertices with $\delta(G) = \ceiling{\frac{5}{6} n} - 2$ such that $\mc_3(G)<\frac{n}{2}$.
\end{theorem}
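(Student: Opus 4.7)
The theorem has two independent parts: a sufficient minimum-degree condition and a matching construction, which I would handle separately.

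For the implication $\delta(G)\ge\tfrac{5}{6}n-1\Rightarrow\mc_3(G)\ge n/2$, I would argue by contradiction. Suppose $G$ admits a $3$-colouring in which every monochromatic component has fewer than $n/2$ vertices, and for each vertex $v$ and colour $i$ let $C_i(v)$ denote the colour-$i$ component through $v$. Every neighbour of $v$ lies in $C_1(v)\cup C_2(v)\cup C_3(v)$, so the minimum-degree hypothesis gives
\[|C_1(v)\cup C_2(v)\cup C_3(v)|\ge 1+\delta(G)\ge \tfrac{5n}{6}.\]
Following the standard Gy\'arf\'as--S\'ark\"ozy double-counting strategy, I would pick an edge $uv$ of some colour, say $1$ (so $C_1(u)=C_1(v)$), and analyse the other two monochromatic components at each of $u$ and $v$ together with their mutual intersections, aiming to produce two monochromatic components of different colours whose union already exceeds $n/2$ in a single colour. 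The main obstacle is the tight constant: the direct form of this method in \cite{DKS} reaches only $\delta\ge\tfrac{7}{8}n$, so closing the gap to $\tfrac{5}{6}n-1$ requires a careful case analysis tracking the overlap pattern of the three components at $v$, together with repeated applications of the minimum-degree bound at vertices inside the pairwise intersections.

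For the construction, I would use the weighted blow-up framework from the introduction applied to the affine plane $AG(2,3)$ with one parallel class removed --- the same hypergraph as in the Gy\'arf\'as--S\'ark\"ozy baseline, but with non-uniform weights. Label the $9$ points of $AG(2,3)$ so that the three lines of the removed parallel class are $L_1,L_2,L_3$, and on each $L_i$ designate one point as \emph{big}, with weight $\beta=2/n$, and the other two as \emph{small}, with weight $\gamma=1/6-1/n$; these satisfy $3\beta+6\gamma=1$. A short calculation shows that a small vertex has non-neighbourhood weight exactly $\beta+\gamma=1/6+1/n$, giving it degree $\lceil\tfrac{5n}{6}\rceil-2$, while a big vertex has the larger degree $2n/3+1$, so $\delta(G)=\lceil\tfrac{5n}{6}\rceil-2$. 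Every used-class line contains $s\in\{0,1,2,3\}$ big points and hence has weight one of $3\gamma=n/2-3$, $\beta+2\gamma=n/3$, $2\beta+\gamma=n/6+3$, or $3\beta=6$, each strictly less than $n/2$; since the three used parallel classes give the three colours and distinct lines within a single class are vertex-disjoint, each monochromatic component is exactly the blow-up of one used-class line, so $\mc_3(G)\le n/2-3<n/2$ as required. The main obstacle here is the bookkeeping: one has to verify that the listed four weights exhaust all possibilities under every valid layout of the three big points (which follows by pigeonhole --- either the three bigs are collinear in a used class, forcing two lines of that class to have zero bigs, or they are non-collinear, in which case two of the three used classes each contain a zero-bigs line), and that the rounding $\beta n=2$, $\gamma n=n/6-1$ respects the divisibility hypothesis $6\mid n$; extending to arbitrary $n$ requires a small perturbation argument that I expect to be routine.
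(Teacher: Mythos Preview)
The paper does not itself prove Theorem~\ref{GuggiariScott}; it is quoted from \cite{GS} and \cite{R}. For the first half there is therefore nothing here to compare your sketch against, and your own acknowledgment that the double-counting of \cite{DKS} only reaches $\tfrac{7}{8}n$ and that closing the gap ``requires a careful case analysis'' is where the proposal effectively stops. The paper does, however, prove the construction half as the $r=3$ case of Theorem~\ref{planeminus}, and here your proposal has a genuine gap.

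Your weighted blow-up keeps all nine points of $AG(2,3)$, giving three ``big'' points weight $\beta=2/n$ and six ``small'' points weight $\gamma=1/6-1/n$. You then assert that a big vertex has ``the larger degree $2n/3+1$''. But $2n/3+1<5n/6-2$ once $n>18$, so for all large $n$ the minimum degree of your graph is $2n/3+1$, not $5n/6-2$. This is not a slip that a different choice of $\beta,\gamma$ repairs: if all nine blobs are nonempty, then for every point $x$ the other two points on its removed-class line must carry combined weight at most $n/6+1$; summing this over the three points of a single removed-class line shows that line has total weight at most $n/4+3/2$, and summing over the three removed-class lines forces $n\le 18$. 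So no nine-point blow-up with all weights positive can reach minimum degree $5n/6-2$ beyond $n=18$.

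The paper's construction (Theorem~\ref{planeminus} with $r=3$, $c=1$) escapes this obstruction by first \emph{deleting} a set $S$ of three points, one from each column, and only then blowing up the remaining six-point hypergraph $\mathcal{H}_3$ with weights $n/6\pm1$. Each surviving point now has a single non-neighbour rather than two, which is what makes $\delta(G)=5n/6-2$ achievable. The choice of $S$ matters: the paper notes that deleting a full row yields a hypergraph $\mathcal{H}_3'$ that is not perturbable, so one must take $S$ to be a column-transversal that is not itself a line of any used parallel class.
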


Note that the $3$-colorings of graphs with $\delta(G) = \ceiling{\frac{5}{6}n} -2$ given by Guggiari and Scott and Rahimi have largest monochromatic components of order just under $\frac{n}{2}$. This is in contrast to the example of Gy\'arf\'as and S\'ark\"ozy above, where the largest monochromatic components have order $\frac{n}{3}$.

The purpose of this note is to generalize the lower bound construction of Guggiari and Scott and Rahimi which disproves Conjecture \ref{exGS} whenever an affine plane of order $r$ exists.

\begin{theorem}\label{main_example}
Let $n$ and $r$ be integers such that $r\geq 3$ and $n\geq r(r-1)((r-1)(r-2)+1)$.  If $(r^2-r) \mid n$ and an affine plane of order $r$ exists, then there exists a graph $G$ on $n$ vertices with $$\delta(G) = \left(1-\frac{r-2}{r^2-r}\right)n-2=\left(1-\frac{r-1}{r^2}+\frac{1}{r^2(r-1)}\right)n-2$$ such that $\mc_r(G)< \frac{n}{r-1}$.
\end{theorem}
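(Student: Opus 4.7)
The plan is to construct $G$ explicitly by combining a between-group blow-up skeleton (in the spirit of Gy\'arf\'as and S\'ark\"ozy) with an additional within-group layer of edges and colors designed to boost the minimum degree without blowing up the monochromatic components. Let $\mathcal{A}$ be the affine plane of order $r$ with parallel classes $\mathcal{P}_1,\dots,\mathcal{P}_{r+1}$, fix a line $L^*\in\mathcal{P}_{r+1}$, and let $T_1,\dots,T_{r-1}$ denote the other lines of $\mathcal{P}_{r+1}$, which partition the $r^2-r$ points off $L^*$ into ``groups'' of size $r$ each. Take the uniform blow-up in which each off-$L^*$ point $y_i$ is replaced by a set $X_i$ of size $m:=n/(r^2-r)$. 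For $y_i,y_j$ in \emph{different} groups, join $X_i$ and $X_j$ by the complete bipartite graph colored by the color $c\le r$ of the unique (shortened) line of $\mathcal{P}_c$ through $y_i,y_j$. Already this between-group structure produces $r$ color-$c$ monochromatic components of size $(r-1)m = n/r$ each, safely below $n/(r-1)$; but the minimum degree is only $\tfrac{r-2}{r-1}n$, which is much less than the claimed value.

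To boost the minimum degree up to $(1-\tfrac{r-2}{r^2-r})n-2$, I would add edges between $X_i$ and $X_j$ for $y_i,y_j$ in the same group $T_\ell$, colored in a carefully chosen way. This is where essentially all the difficulty lies, and where the hypothesis $n\ge r(r-1)((r-1)(r-2)+1)$ becomes relevant. A first attempt of coloring the whole within-group edge set with a single color would be disastrous: it would merge two of the $(r-1)m$-vertex color-$c$ components across the group into one of size $2(r-1)m \ge n/(r-1)$. The remedy, generalizing the construction of Guggiari--Scott and Rahimi, is to use a secondary affine-plane-like design inside each group together with possibly non-uniformly sized blow-up pieces (made possible by subdividing certain blow-up classes into smaller parts of integer size, which is exactly what the lower bound $m\ge (r-1)(r-2)+1$ permits) so that each color-$c$ monochromatic component contains the blow-up of one shortened $\mathcal{P}_c$-line plus a strictly smaller additional contribution.

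With the construction in hand, I would verify the two quantitative claims as follows. For the minimum degree: each vertex has $m-1$ non-neighbors in its own blow-up class, a specifically computed count of additional within-group non-neighbors depending on the exact within-group design, and no non-neighbors outside its own group (since points from different groups lie on a common shortened $\mathcal{P}_c$-line, making the corresponding classes complete bipartite in color $c$). A careful total gives exactly $n-1 - \left(\tfrac{r-2}{r^2-r}n + 1\right) = (1-\tfrac{r-2}{r^2-r})n - 2$. For the monochromatic component bound: each color-$c$ component is shown to be contained in the union of one shortened $\mathcal{P}_c$-line blow-up together with strictly fewer than $m$ extra vertices, giving total size strictly less than $rm = n/(r-1)$.

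The main obstacle is the within-group design: the constraints of boosting the degree to exactly the claimed value while not merging any color-$c$ monochromatic component past $n/(r-1)$ require a very delicate interplay of non-uniform part sizes and color choices. This is precisely the Guggiari--Scott--Rahimi construction in the case $r=3$, and generalizing it to all $r\ge 3$ for which an affine plane of order $r$ exists requires using the full structure of the affine plane of order $r$ on each group $T_\ell$ to arrange the within-group coloring.
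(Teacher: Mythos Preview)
Your proposal has a genuine gap, and it diverges from the paper's construction at the very first step. You delete a \emph{line} $L^*\in\mathcal{P}_{r+1}$ and then try to compensate by inserting a ``within-group'' layer of colored edges. The paper does something different and simpler: it deletes the parallel class $L_{r+1}$ together with a set $S$ of $r$ points that is \emph{not} a line, namely $S=\{v_{r,1},\dots,v_{r,r-1}\}\cup\{v_{r-1,r}\}$ (all but one point of the bottom row, plus one point from the row above in the last column). The resulting hypergraph $\mathcal{H}_r$ already has $\delta^*(\mathcal{H}_r)=(1-\tfrac{r-2}{r^2-r})|V|$ with no extra edges needed, because every column of the affine plane loses exactly one point to $S$ and hence each surviving vertex has only $r-2$ non-neighbors. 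The whole construction is then a single weighted blow-up of $\mathcal{H}_r$: one takes the uniform blow-up and perturbs the sizes by $-c$ everywhere except on a second ``shifted'' transversal $A$ (one row up from $S$) where the sizes are increased by $(r-2)c$. The key combinatorial fact is that every size-$r$ edge of $\mathcal{H}_r$ meets $A$ in at most one vertex, which forces every monochromatic component to have order at most $\tfrac{n}{r-1}-c$. The hypothesis $n\ge r(r-1)((r-1)(r-2)+1)$ is used only to ensure that the size-$(r-1)$ edges (those that did meet $S$) also stay below $\tfrac{n}{r-1}-c$ after the $+ (r-2)c$ boost; it has nothing to do with subdividing blow-up classes.

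Your route, by contrast, is precisely the one the paper warns against: deleting a line yields the hypergraph the paper calls $\mathcal{H}_r'$ (discussed explicitly for $r=3$), whose top level is \emph{not perturbable}---no reweighting of the blow-up classes can push all maximal monochromatic components strictly below $\tfrac{n}{r-1}$. You try to escape this by adding within-group edges, but you never specify the design, and any color-$c$ edge you add inside a group $T_\ell$ necessarily merges two distinct color-$c$ components (since each shortened $\mathcal{P}_c$-line meets $T_\ell$ in exactly one point). To raise the minimum degree by the required $\Theta(m)$ per vertex while keeping every merged component below $rm$, you would need a genuinely new combinatorial gadget that you have not supplied; invoking ``a secondary affine-plane-like design'' on the $r$ points of a group does not by itself define one. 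As written, the proposal is a sketch of an approach that the paper's own analysis shows cannot be completed by size perturbations alone, and the additional ingredient you need is left entirely unspecified.
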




The construction is based on a blow-up of the following hypergraph $\mathcal{H}_r$ which is derived from an affine plane of order $r$.

\begin{definition}[$\mathcal{H}_r$]\label{H_r}
Let $r\geq 3$ such that an affine plane of order $r$ exists.  Let $\mathcal{G}_r=(V, \mathcal{L})$ be an affine plane of order $r$.  Let $\{L_1, \dots, L_{r+1}\}$ be the partition of $\mathcal{L}$ into parallel classes.  Label the vertices of $\mathcal{G}_r$ as $v_{i,j}$ with $i,j \in [r]$ so that $L_1=\{\{v_{i,1}, \dots, v_{i,r}\}: i\in [r]\}$ and $L_{r+1}=\{\{v_{1,i}, \dots, v_{r,i}\}: i\in [r]\}$ 
(in Figure \ref{H_rfigure}, $L_1$ is represented by the rows and $L_{r+1}$ by the columns).
Let $S=\{v_{r,i} : i\in [r-1]\} \cup \{v_{r-1,r}\}$. 

Let $\mathcal{H}_r$ be the hypergraph obtained from $\mathcal{G}_r$ by deleting the lines from $L_{r+1}$ and the vertices of $S$ from each of the remaining edges; i.e.\ let $\mathcal{H}_r=(V\setminus S, E)$ where $E=\{e\setminus S: e\in \mathcal{L}\}$.
\end{definition}

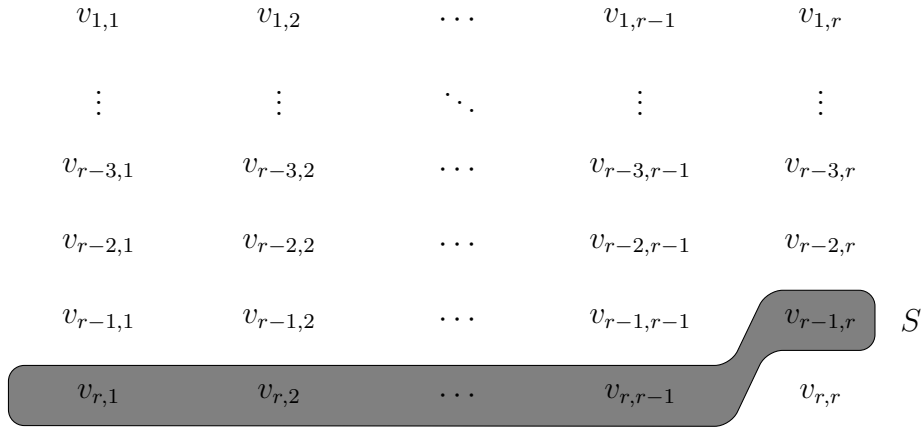
\begin{figure}[ht]
\centering
\begin{tikzpicture}
\def\x{2.4};
\def\y{1};
\def\round{6pt}

\draw[rounded corners=\round,fill=gray] (-0.5*\x,-0.4*\y) -- ++(0,0.8*\y) -- ++(4.0*\x,0) -- ++(0.2*\x,1*\y) -- ++(0.6*\x,0) -- ++(0,-0.8*\y) -- ++(-0.6*\x,-0.0*\y) -- ++(-0.2*\x,-1.0\y) -- cycle;
\draw (4.5*\x,1*\y) node{$S$};

\draw (0*\x,0*\y) node{$v_{r,1}$};
\draw (1*\x,0*\y) node{$v_{r,2}$};
\draw (2*\x,0*\y) node{$\cdots$};
\draw (3*\x,0*\y) node{$v_{r,r-1}$};
\draw (4*\x,0*\y) node{$v_{r,r}$};

\draw (0*\x,1*\y) node{$v_{r-1,1}$};
\draw (1*\x,1*\y) node{$v_{r-1,2}$};
\draw (2*\x,1*\y) node{$\cdots$};
\draw (3*\x,1*\y) node{$v_{r-1,r-1}$};
\draw (4*\x,1*\y) node{$v_{r-1,r}$};

\draw (0*\x,2*\y) node{$v_{r-2,1}$};
\draw (1*\x,2*\y) node{$v_{r-2,2}$};
\draw (2*\x,2*\y) node{$\cdots$};
\draw (3*\x,2*\y) node{$v_{r-2,r-1}$};
\draw (4*\x,2*\y) node{$v_{r-2,r}$};

\draw (0*\x,3*\y) node{$v_{r-3,1}$};
\draw (1*\x,3*\y) node{$v_{r-3,2}$};
\draw (2*\x,3*\y) node{$\cdots$};
\draw (3*\x,3*\y) node{$v_{r-3,r-1}$};
\draw (4*\x,3*\y) node{$v_{r-3,r}$};

\draw (0*\x,4*\y) node{$\vdots$};
\draw (1*\x,4*\y) node{$\vdots$};
\draw (2*\x,4*\y) node{$\ddots$};
\draw (3*\x,4*\y) node{$\vdots$};
\draw (4*\x,4*\y) node{$\vdots$};

\draw (0*\x,5*\y) node{$v_{1,1}$};
\draw (1*\x,5*\y) node{$v_{1,2}$};
\draw (2*\x,5*\y) node{$\cdots$};
\draw (3*\x,5*\y) node{$v_{1,{r-1}}$};
\draw (4*\x,5*\y) node{$v_{1,r}$};


\end{tikzpicture}
\caption{The hypergraph $\mathcal{H}_r$.}\label{H_rfigure}
\end{figure}

Given a hypergraph $H=(V,E)$, the \emph{rank} of $H$, denoted $r(H)$, is $\max\{|e|: e\in E\}$ and the \emph{proportional rank} of $H$ is $\frac{r(H)}{|V|}$.  The \emph{edge chromatic number} of $H$ is the minimum number of colors needed to color the edges of $H$ so that each color class forms a matching. Given a vertex $v\in V$, let $N[v]=\{u: \exists e\in E, \{u,v\}\subseteq e\}$; in other words, $N[v]$ is the set of all vertices (including $v$) which are contained in an edge with $v$. Let $\delta^*(H)=\min\{|N[v]|: v\in V\}$.

Note the following properties of $\mathcal{H}_r=(V,E)$:
\begin{enumerate}[label={(P\arabic*)}]
\item \label{itm:P1} the edge chromatic number of $\mathcal{H}_r$ is $r$,
\item \label{itm:P2} the proportional rank of $\mathcal{H}_r$ is $\frac{r}{r^2-r}=\frac{1}{r-1}$,
\item \label{itm:P3} $\delta^*(\mathcal{H}_r)=|V|-(r-2)=(1-\frac{r-2}{r^2-r})|V|$.
\end{enumerate}

Roughly speaking, we prove Theorem \ref{main_example} by taking a uniform blow-up of $\mathcal{H}_r$ (which has monochromatic components of order $\frac{n}{r-1}$) and then slightly  ``perturbing" the sizes of the blown-up sets so that all the monochromatic components have order less than $\frac{n}{r-1}$.  This raises the more general question of when such a perturbation is possible, which we address in Section \ref{sec:perturb}.

As is elaborated in Section \ref{sec:rough}, the choice of vertices $S$ to delete in the definition of $\mathcal{H}_r$ is to ensure that a uniform blow-up of $\mathcal{H}_r$ is ``perturbable.'' As an example of a hypergraph which is not ``perturbable,'' let $\mathcal{H}_3'=(V,E)$ be obtained from an affine plane of order 3, by deleting one parallel class and deleting the vertices from one of the remaining edges (say $v_{3,1}, v_{3,2}, v_{3,3}$ as in Figure \ref{H_rfigure}).  The edge chromatic number of $\mathcal{H}_3'$ is 3, the proportional rank is $1/2$, and $\delta^*(\mathcal{H}_3')=5=(1-\frac{1}{6})|V|$.  By taking a uniform blow-up of $\mathcal{H}_3'$ we obtain a 3-colored graph $G$ with $\delta(G)= \frac{5n}{6}-1$, and every monochromatic component has order at most $n/2$.  However, no matter how we change the sizes of the blown-up sets, one of the monochromatic components will have order at least $n/2$. 

\section{Perturbable hypergraphs}\label{sec:perturb}

It is possible to skip directly to Section \ref{skipto} to see the proof of Theorem \ref{main_example}; however, to understand where the construction comes from we need to take a slight detour.

The \emph{standard simplex} of $\mathbb{R}^n$ is the set of vectors $(w_1, \dots, w_n)$ such that $w_i\geq 0$ for all $i\in [n]$ and $\sum_{i=1}^nw_i=1$.  A \emph{weight assignment} on a hypergraph $H=(V,E)$ where $V=\{v_1, \dots, v_n\}$ is a function $w:V\to \mathbb{R}$ such that $(w(v_1), \dots, w(v_n))$ is in the standard simplex of $\mathbb{R}^n$. For all $S\subseteq V$ let the weight of $S$, denoted $w(S)$, be $\sum_{v\in S}w(v)$.   We say that $w:V\to \mathbb{R}$ given by $w(v)=\frac{1}{|V|}$ for all $v\in V$ is the \emph{uniform} weight assignment.

\begin{definition}[Perturbation, perturbable]
A \emph{perturbation} on a hypergraph $H = (V,E)$ is a function $p:V\to\mathbb{R}$ such that $\sum_{i=1}^n p(v_i) = 0$ and for all $e \in E$, $p(e) = \sum_{v\in e} p(v) < 0$. We say $H$ is \emph{perturbable} if a perturbation on $H$ exists.
\end{definition}

Observe that if $w$ is a positive weight assignment on $H$ (meaning $w(v) > 0$ for all $v \in V$) and $p$ is a perturbation on $H$, then $w+\ep p$ is also a weight assignment on $H$ for sufficiently small $\ep>0$ (say $\ep < \min \left\{\left|\frac{w(v)}{p(v)}\right|: v\in V, p(v)\neq 0\right\}$). Since $p(e) < 0$ for every $e \in E$, the weight assignment $w+\ep p$ is strictly smaller than $w$ on every edge of $H$. Thus, if a perturbation on $H$ exists, then we can alter any weight assignment by at most $\ep$ at each vertex (for $\ep$ sufficiently small) and \emph{strictly} decrease the weights on the edges.

Theorem \ref{perturbable} gives an equivalent condition for the existence of a perturbation, but first we must recall the following definitions. Given a hypergraph $H=(V,E)$, a \emph{fractional matching} is a function $m:E\to [0,1]$ such that for all $v\in V$, $\sum_{e\ni v}m(e)\leq 1$, and a \emph{fractional vertex cover} is a function $t:V\to [0,1]$ such that for all $e\in E$, $\sum_{v\in e}t(v)\geq 1$. A fractional matching is called \emph{perfect} if we have equality for all $v\in V$. We let 
\[
\nu^*(H)=\max\left\{\sum_{e\in E}m(e): m \text{ is a fractional matching on } H\right\} 
\]
and 
\[
\tau^*(H)=\min\left\{\sum_{v\in V}t(v): t \text{ is a fractional vertex cover on } H\right\}.  
\]
It is well known consequence of the duality theorem in linear programming that $\tau^*(H)=\nu^*(H)$ for all hypergraphs.  When it is clear from context we just write $\tau^*$ and $\nu^*$ for $\tau^*(H)$ and $\nu^*(H)$ respectively. Note that if $H$ is $k$-uniform and has a perfect fractional matching, then $\nu^*(H) = \frac{n}{k}$.

\begin{theorem}\label{perturbable}
Let $H=(V,E)$ be a hypergraph.  $H$ is perturbable if and only if $H$ does not have a perfect fractional matching.
\end{theorem}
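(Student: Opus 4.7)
The plan is to handle both directions by linear-programming duality. The ``only if'' direction is short and direct: suppose for contradiction that $H$ has both a perfect fractional matching $m$ and a perturbation $p$. Swapping the order of summation gives
\[
0 = \sum_{v \in V} p(v) = \sum_{v \in V} p(v) \sum_{e \ni v} m(e) = \sum_{e \in E} m(e)\, p(e),
\]
where the first equality uses $\sum_v p(v) = 0$ and the second uses $\sum_{e \ni v} m(e) = 1$. Since perfectness forces some $m(e) > 0$ (any $v \in V$ lies in an edge with $m(e) > 0$) while every $p(e) < 0$, the final sum is strictly negative, a contradiction.

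For the ``if'' direction, my plan is to view ``does a perturbation exist?'' as the feasibility question for the mixed linear system
\[
\sum_{v \in V} p(v) = 0, \qquad p(e) < 0 \text{ for all } e \in E,
\]
in the unknown $p \in \mathbb{R}^V$, and then apply Motzkin's transposition theorem (or equivalently Gordan's theorem after projecting onto the hyperplane $\sum_v p(v) = 0$). The dual certificate of infeasibility will consist of some $\mu : E \to [0,\infty)$ with $\mu \not\equiv 0$ and some $\lambda \in \mathbb{R}$ satisfying $\sum_{e \ni v} \mu(e) + \lambda = 0$ for every $v \in V$. From nonnegativity of $\mu$ together with $\mu \not\equiv 0$ I would extract $\lambda < 0$ (if $\lambda = 0$ then $\sum_{e \ni v} \mu(e) = 0$ for each $v$, forcing $\mu \equiv 0$); then $m(e) := \mu(e)/(-\lambda)$ is a perfect fractional matching, contradicting the hypothesis. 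Taking the contrapositive completes the proof.

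The main obstacle I anticipate is invoking the correct version of the alternative theorem: plain Farkas handles nonstrict inequalities, whereas our system mixes an equality with strict inequalities, so one should either appeal to Motzkin directly or first rescale to replace $p(e) < 0$ by $p(e) \leq -1$ before applying Farkas (the rescaling is legitimate because the system is homogeneous in $p$ apart from that constraint). Once the duality is set up correctly, the remaining bookkeeping --- showing $\lambda < 0$ and renormalizing $\mu$ into a fractional matching --- is straightforward, and the uniform value $\sum_{e \ni v} m(e) = 1$ falls out automatically from the dual constraint being the same at every vertex.
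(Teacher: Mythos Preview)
Your argument is correct. Both proofs rest on linear-programming duality, but the packaging differs. The paper applies plain Farkas to the system $Am=\mathbf{1}$, $m\geq 0$, obtaining the alternative $A^T w \leq 0$, $\mathbf{1}^T w > 0$, and then proves by an explicit constant-shift that this alternative is equivalent to the perturbation system $A^T p < 0$, $\mathbf{1}^T p = 0$; both directions of the theorem come out of Farkas simultaneously. You instead handle the ``only if'' direction by a direct double-counting identity (no duality needed at all, which is cleaner), and for the ``if'' direction you invoke Motzkin's transposition theorem so that the strict inequalities are absorbed into the black box rather than reduced by hand. What the paper buys is that only the most standard form of Farkas is needed, with the strict/nonstrict conversion made explicit; what your route buys is a shorter and more transparent forward direction and a one-line appeal to a (slightly heavier) alternative theorem for the reverse. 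One small point you leave implicit: the rescaled $m=\mu/(-\lambda)$ automatically lands in $[0,1]$ because each edge contains some vertex $v$ and $m(e)\leq \sum_{e'\ni v} m(e')=1$.
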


\begin{proof}
Let $n:=|V|$ and $e:=|E|$ and let $A$ be the $n$-by-$e$ incidence matrix of $H$ (with rows indexed by vertices and columns by edges).  Let $\mathbf{1}$ be the $n$-dimensional vector of all 1's.  Note that in this language, a perfect fractional matching $m$ is a solution to the system $Am = \mathbf{1}$, $m \geq 0$, and a perturbation $p$ is a solution to the system $A^T p < 0$, $\mathbf{1}^T p = 0$.

Recall Farkas' Lemma (see \cite{LP}), which states for a $n$-by-$e$ matrix $A$ and $n$-dimensional vector $b$, there is no $m\geq 0$ such that $Am = b$ if and only if there exists $w$ such that $A^T w \leq 0$ and $b^T w > 0$.

We claim that for the given $A$, the solvability of $A^T w \leq 0$, $\mathbf{1}^T w > 0$ is equivalent to the solvability of $A^T p < 0$, $\mathbf{1}^T p = 0$. So by Farkas' lemma, the result will follow by establishing this claim.


First suppose there exists $w$ such that $A^T w \leq 0$ and $\mathbf{1}^T w > 0$. Letting $p = w - \frac{\mathbf{1}^Tw}{n}\mathbf{1}$, we have $\mathbf{1}^T p = 0$ and $p < w$.  Since $A$ has only nonnegative entries, and it has at least one positive entry in each column, we have $A^T p < A^T w \leq 0$, so $p$ is a perturbation for $H$.

For the other direction, suppose there exists $p$ such that $A^T p < 0$ and $\mathbf{1}^T p = 0$. Let $\alpha>0$ be the absolute value of the largest entry of $A^T p$ (smallest in absolute value), and let $w = p + \frac{\alpha}{n}\mathbf{1}$. Then $\mathbf{1}^T w=\alpha > 0$ and the largest entry of $A^T w=A^Tp+\frac{\alpha}{n} A^T \mathbf{1}$ is $-\alpha + \frac{\alpha}{n} n \leq 0$, since the largest entry of $A^T \mathbf{1}$ is the rank of $H$ which is at most $n$.
\end{proof}

\section{When an affine plane of order $r$ exists}\label{sec:main}

Given a hypergraph $H=(V,E)$ and a weight assignment $w$, the \emph{top-level} of $H$, denoted $\overline{H}$, is the hypergraph $(V,E')$ where $E'\subseteq E$ is the set of edges of maximum weight.

\subsection{Rough construction}\label{sec:rough}

Let $\mathcal{H}_r=(V,E)$ be the hypergraph from Definition \ref{H_r}.  We first show that under the uniform weight assignment, the top-level of $\mathcal{H}_r=(V,E)$ is perturbable. This together with properties \ref{itm:P1}, \ref{itm:P2}, and \ref{itm:P3}, imply that for all sufficiently large $n$ we can use $\mathcal{H}_r$ to define a graph $G$ on $n$ vertices with $\delta(G)\geq (1-\frac{r-2}{r^2-r})n-O(1)$ and $\mc_r(G)<\frac{n}{r-1}$.

\begin{observation}\label{rough}
Let $r\geq 3$ be an integer such that an affine plane of order $r$ exists.  Under the uniform weight assignment, the top-level of $\mathcal{H}_r=(V,E)$ is perturbable.
\end{observation}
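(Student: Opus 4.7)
My plan is to apply Theorem \ref{perturbable} to the top-level hypergraph $\overline{\mathcal{H}_r}$. That is, I will show $\overline{\mathcal{H}_r}$ has no perfect fractional matching, which by Theorem \ref{perturbable} (applied to $\overline{\mathcal{H}_r}$) yields the desired perturbation.

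First I would identify the top-level edges. Under the uniform weight assignment all vertices have the same weight, so the top-level edges are exactly those edges of maximum size. Originally each line of $\mathcal{G}_r$ has $r$ vertices, so the top-level edges are precisely those surviving lines in $L_1 \cup \cdots \cup L_r$ that are disjoint from $S = \{v_{r,1},\dots,v_{r,r-1},v_{r-1,r}\}$. From $L_1$ (rows), rows $1,\dots,r-2$ survive intact, giving $r-2$ edges $R_1,\dots,R_{r-2}$. For each $j \in \{2,\dots,r\}$, a line of $L_j$ avoids $S$ iff it avoids all of row $r$ except $v_{r,r}$ and also avoids $v_{r-1,r}$; the unique such line is the one through $v_{r,r}$ (note that $v_{r-1,r}$ lies on the column through $v_{r,r}$, which belongs to $L_{r+1}$, hence not to $L_j$). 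Call these surviving lines $e_2,\dots,e_r$.

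Next I would argue by contradiction. Suppose a perfect fractional matching $m$ on $\overline{\mathcal{H}_r}$ exists; write $r_i := m(R_i)$ and $s_k := m(e_k)$. The $r-1$ lines $e_2,\dots,e_r$ pairwise intersect only at $v_{r,r}$ (as distinct lines in distinct parallel classes of an affine plane meet in at most one point), and each hits row $i$ for $i \in [r-2]$ in exactly one vertex. So inside row $i$ these lines cover exactly $r-1$ of the $r$ vertices; the one uncovered vertex $v \in R_i$ gives the constraint $r_i = 1$. Then for any other vertex $v_{i,j} \in R_i$ lying on some $e_k$, the matching condition gives $r_i + s_k = 1$, forcing $s_k = 0$. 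Ranging over $k$, every $s_k$ equals $0$. But $v_{r,r}$ is covered only by $e_2,\dots,e_r$ (since row $r$ was essentially deleted), so $s_2 + \cdots + s_r = 1$, a contradiction.

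The main obstacle is the top-level analysis: one must check carefully that the lines $e_2,\dots,e_r$ really do avoid $S$ entirely (so they remain full-sized), and in particular that the specific choice of $S$ places $v_{r-1,r}$ on the column through $v_{r,r}$, which is the reason $e_k$ does not pick up $v_{r-1,r}$. Once this is set up, the fractional matching contradiction is a short linear computation, and the fact that the structure collapses through the single vertex $v_{r,r}$ is exactly what makes the hypergraph perturbable (contrasting with the $\mathcal{H}_3'$ example mentioned in the introduction, whose top-level admits a perfect fractional matching).
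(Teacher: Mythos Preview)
Your proof is correct and takes a somewhat different route from the paper's. Both arguments identify the top-level edges of $\mathcal{H}_r$ as the $r-2$ intact rows $R_1,\dots,R_{r-2}$ together with the $r-1$ lines $e_2,\dots,e_r$ through $v_{r,r}$ (the paper phrases this as ``every edge of $E'$ either comes from $L_1$ or contains $v_{r,r}$''), and both then show $\overline{\mathcal{H}_r}$ has no perfect fractional matching so that Theorem~\ref{perturbable} applies. The paper does this via LP duality: it exhibits an explicit fractional vertex cover $t$ (weight $\frac{1}{r-1}$ on $\{v_{r,r}\}\cup\{v_{i,j}:i\in[r-2],\,j\in[r-1]\}$ and $0$ elsewhere) with total weight $r-2+\frac{1}{r-1}<r-1=\frac{|V|}{r}$, whence $\nu^*=\tau^*<\frac{|V|}{r}$ rules out a perfect fractional matching. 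You instead argue directly on the matching equations: the unique vertex of $R_i$ missed by all the $e_k$ (namely $v_{i,r}$) forces $m(R_i)=1$, hence $m(e_k)=0$ for every $k$, contradicting the constraint at $v_{r,r}$. Your argument is slightly more elementary in that it never invokes the duality $\nu^*=\tau^*$, while the paper's cover is shorter and makes the deficit $\frac{|V|}{r}-\nu^*$ quantitative.
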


\begin{proof}
Given the uniform weight assignment on $\mathcal{H}_r$, let $\overline{\mathcal{H}_r}=(V,E')$ be the top-level of $\mathcal{H}_r$ (which is just the edges of order $r$ in this case).  
Let $t:V\to \mathbb{R}$ given by $t(v)=\frac{1}{r-1}$ if $v\in \{v_{r,r}\}\cup \{v_{i,j}: i\in [r-2], j\in [r-1]\}$ and $t(v)=0$ otherwise (see Figure \ref{roughfigure}).  We first claim that $t$ is a fractional vertex cover of $\overline{\mathcal{H}_r}$.  Indeed, every edge of $E'$ either comes from $L_1$ or contains $v_{r,r}$, and consequently intersects $\{v_{r-1,i}: i\in [r-1]\}\cup \{v_{i,r}: i\in [r-2]\}$ in at most one vertex.  So we have
\[
\nu^*=\tau^*\leq \sum_{v\in V}t(v)=(r-1)(r-2)\frac{1}{r-1}+\frac{1}{r-1}=r-2+\frac{1}{r-1}<r-1=\frac{|V|}{r},
\]  
and thus $\overline{\mathcal{H}_r}$ does not have a perfect fractional matching. By Theorem \ref{perturbable}, $\overline{\mathcal{H}_r}$ is perturbable.
\end{proof}

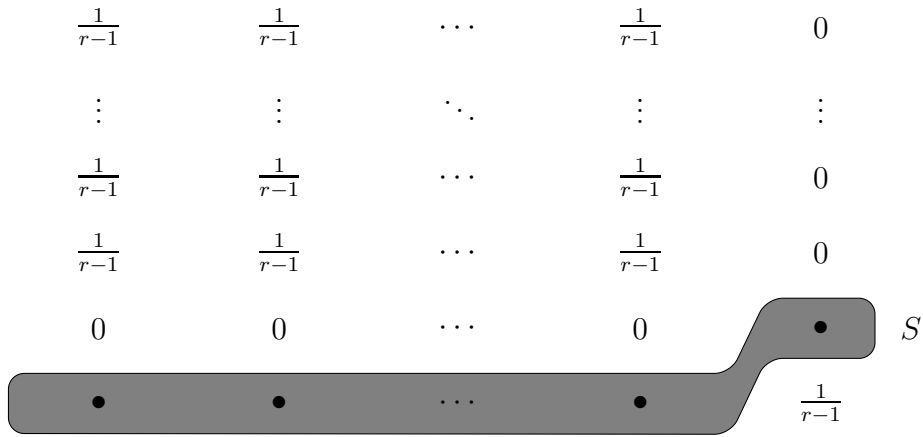
\begin{figure}[ht]
\centering
\begin{tikzpicture}
\def\x{2.4};
\def\y{1};
\def\round{6pt}

\draw[rounded corners=\round,fill=gray] (-0.5*\x,-0.4*\y) -- ++(0,0.8*\y) -- ++(4.0*\x,0) -- ++(0.2*\x,1*\y) -- ++(0.6*\x,0) -- ++(0,-0.8*\y) -- ++(-0.6*\x,-0.0*\y) -- ++(-0.2*\x,-1.0\y) -- cycle;
\draw (4.5*\x,1*\y) node{$S$};

\draw (0*\x,0*\y) node{\textbullet};
\draw (1*\x,0*\y) node{\textbullet};
\draw (2*\x,0*\y) node{$\cdots$};
\draw (3*\x,0*\y) node{\textbullet};
\draw (4*\x,0*\y) node{$\frac{1}{r-1}$};

\draw (0*\x,1*\y) node{0};
\draw (1*\x,1*\y) node{0};
\draw (2*\x,1*\y) node{$\cdots$};
\draw (3*\x,1*\y) node{0};
\draw (4*\x,1*\y) node{\textbullet};

\draw (0*\x,2*\y) node{$\frac{1}{r-1}$};
\draw (1*\x,2*\y) node{$\frac{1}{r-1}$};
\draw (2*\x,2*\y) node{$\cdots$};
\draw (3*\x,2*\y) node{$\frac{1}{r-1}$};
\draw (4*\x,2*\y) node{0};

\draw (0*\x,3*\y) node{$\frac{1}{r-1}$};
\draw (1*\x,3*\y) node{$\frac{1}{r-1}$};
\draw (2*\x,3*\y) node{$\cdots$};
\draw (3*\x,3*\y) node{$\frac{1}{r-1}$};
\draw (4*\x,3*\y) node{0};

\draw (0*\x,4*\y) node{$\vdots$};
\draw (1*\x,4*\y) node{$\vdots$};
\draw (2*\x,4*\y) node{$\ddots$};
\draw (3*\x,4*\y) node{$\vdots$};
\draw (4*\x,4*\y) node{$\vdots$};

\draw (0*\x,5*\y) node{$\frac{1}{r-1}$};
\draw (1*\x,5*\y) node{$\frac{1}{r-1}$};
\draw (2*\x,5*\y) node{$\cdots$};
\draw (3*\x,5*\y) node{$\frac{1}{r-1}$};
\draw (4*\x,5*\y) node{0};


\end{tikzpicture}
\caption{The fractional vertex cover of $\overline{\mathcal{H}_r}$.}\label{roughfigure}
\end{figure}

One may wonder if other choices of $S$ in the definition of $\mathcal{H}_r$ would yield a perturbable hypergraph satisfying properties \ref{itm:P1}, \ref{itm:P2}, and \ref{itm:P3}. An exhaustive search shows that there are no other choices of $S$ (up to isomorphism) for $r = 3$ and $r = 4$, but there are other choices for say $r=5$.  While it would be interesting to characterize the possible choices of $S$, doing so would not improve the given construction.

\subsection{Fine tuning}\label{skipto}

\begin{theorem}\label{planeminus}
Let $n,r,c$ be integers such that $r\geq 3$, $c\geq 1$, and $n\geq r(r-1)((r-1)(r-2)+1)c$.  If $(r^2-r) \mid n$ and an affine plane of order $r$ exists, then there exists a graph $G$ on $n$ vertices with $\delta(G) = (1-\frac{r-2}{r^2-r})n-c-1$ such that $\mc_r(G)\leq \frac{n}{r-1}-c$.
\end{theorem}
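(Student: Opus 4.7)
The plan is to realize $G$ as a specific non-uniform blow-up of $\mathcal{H}_r$. Let $k = n/(r^2 - r)$, which is a positive integer by hypothesis, and fix any $i_0 \in [r-2]$. Define
\[
P := \{v_{r-1, j} : j \in [r-1]\} \cup \{v_{i_0, r}\}
\]
and assign blown-up set sizes $|X_v| := k + (r-2)c$ if $v \in P$ and $|X_v| := k - c$ otherwise. Since $|P| = r$ and $|V \setminus P| = r^2 - 2r$, a direct calculation gives $\sum_v |X_v| = (r^2 - r)k = n$, and positivity of each $|X_v|$ follows from $k \geq ((r-1)(r-2) + 1) c \geq 3c$, which is exactly the hypothesis on $n$. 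Let $G$ be the resulting edge-colored blow-up of $\mathcal{H}_r$.

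To show $\mc_r(G) \leq n/(r-1) - c$, I bound $\sum_{v \in e} |X_v|$ for each edge $e$ of $\mathcal{H}_r$. Using the affine plane structure and the definition of $S$, every top-level edge contains at most one vertex of $P$, giving weight at most $(r-1)(k-c) + (k + (r-2)c) = rk - c$, with equality for row $L_1^{(i_0)}$ and each of the $r-1$ lines through $v_{r, r}$. Among the non-top-level edges: the row-$(r-1)$ edge lies entirely in $P$, with weight $(r-1)(k+(r-2)c) \leq rk - c$ iff $k \geq ((r-1)(r-2)+1)c$, matching the hypothesis exactly; each of the $r-1$ lines through $v_{i_0, r}$ contains exactly two vertices of $P$ (namely $v_{i_0, r}$ and the row-$(r-1)$ vertex of that line), giving weight $(r-1)(k+c) \leq rk - c$ iff $k \geq rc$, which follows since $(r-1)(r-2)+1 \geq r$ for $r \geq 3$; and every remaining edge contains at most one vertex of $P$ and has weight at most $(r-1)k$ or $(r-2)(k-c)$, well within $rk - c$.

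For the minimum degree, since the blow-up includes within-set edges, a vertex $u \in X_v$ has $\deg_G(u) = n - 1 - \sum_{x_j \notin N[v]}|X_j|$. The non-neighbors of $v$ are its $r-2$ column-mates in $V \setminus S$. A case analysis parallel to the above shows that every $v \notin P$ has exactly one non-neighbor in $P$, giving non-neighbor weight $(r-3)(k-c) + (k + (r-2)c) = (r-2)k + c$ (the tight case), while every $v \in P$ has no non-neighbor in $P$, giving weight $(r-2)(k-c)$, strictly less. Therefore $\delta(G) = n - 1 - ((r-2)k + c) = (1 - \tfrac{r-2}{r^2-r})n - c - 1$. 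The main obstacle is verifying the combinatorial claims about $P$, especially the binding weight of the row-$(r-1)$ edge (which forces the precise form of the lower bound on $n$) and the ``bad'' lines through $v_{i_0, r}$ (the only edges outside row $r-1$ containing more than one vertex of $P$); these facts reduce to the unique-intersection property of the affine plane combined with the explicit choice of $S$.
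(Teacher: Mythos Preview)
Your proof is correct and essentially identical to the paper's. Your set $P$ coincides with the paper's set $A$ (the paper fixes $i_0 = r-2$, while you allow any $i_0 \in [r-2]$, a cosmetic difference), and the size assignment, total-count check, and minimum-degree computation match verbatim. The only distinction is that for the component bound the paper handles all non-top-level edges in one stroke via the crude estimate $|e|\cdot\max_v |X_v| \le (r-1)(k+(r-2)c) \le rk-c$, whereas you carry out a finer case split (row $r-1$, the lines through $v_{i_0,r}$, etc.); your extra work is correct but not needed, since the crude bound already identifies the row-$(r-1)$ edge as the binding case and yields the same threshold $k \ge ((r-1)(r-2)+1)c$.
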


Note that the main case of interest is when $c=1$, but phrasing the result in general as we do shows that by lowering the minimum degree further, one can further decrease the size of the largest monochromatic component.  Also note that our construction only addresses the case $(r^2-r) \mid n$ for simplicity.  
It is possible that in the case when $r^2-r$ does not divide $n$, by slightly modifying this construction (as was done in \cite{GS} for the case $r=3$), one can construct a graph $G$ with $\delta(G) =\ceiling{(1-\frac{r-2}{r^2-r})n}-2$ such that $\mc_r(G)\leq \ceiling{\frac{n}{r-1}}-1$.

\begin{proof}
Let $\mathcal{H}_r=(V,E)$ be the hypergraph from Definition \ref{H_r} and let $G$ be a uniform blow-up of $\mathcal{H}_r$ where $v\in V$ becomes $X_v$ in $G$ (with $|X_v|=\frac{n}{r^2-r}$).  

Let $A=\{v_{r-1, i} : i\in [r-1]\}\cup \{v_{r-2, r}\}$.  We now adjust the size of each $X_v$ as follows:
\[ |X_v| := \begin{cases}
\frac{n}{r^2-r}-c      ,& \text{ if } v \in V(\mathcal{H}_r)\setminus A ,\\
\frac{n}{r^2-r}+(r-2)c ,& \text{ if } v \in A .
\end{cases} \]

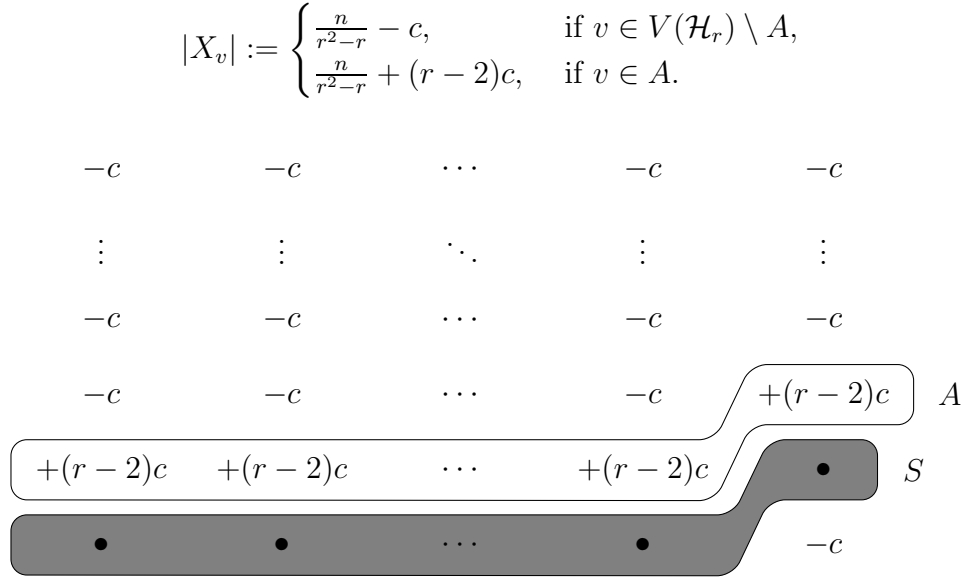
\begin{figure}[ht]
\centering
\begin{tikzpicture}
\def\x{2.4};
\def\y{1};
\def\round{6pt}

\draw[rounded corners=\round,fill=gray] (-0.5*\x,-0.4*\y) -- ++(0,0.8*\y) -- ++(4.0*\x,0) -- ++(0.2*\x,1*\y) -- ++(0.6*\x,0) -- ++(0,-0.8*\y) -- ++(-0.6*\x,-0.0*\y) -- ++(-0.2*\x,-1.0\y) -- cycle;
\draw (4.5*\x,1*\y) node{$S$};
\draw[rounded corners=\round] (-0.5*\x,0.6*\y) -- ++(0,0.8*\y) -- ++(3.9*\x,0) -- ++(0.2*\x,1*\y) -- ++(0.9*\x,0) -- ++(0,-0.8*\y) -- ++(-0.9*\x,-0.0*\y) -- ++(-0.2*\x,-1.0\y) -- cycle;
\draw (4.7*\x,2*\y) node{$A$};

\draw (0*\x,0*\y) node{\textbullet};
\draw (1*\x,0*\y) node{\textbullet};
\draw (2*\x,0*\y) node{$\cdots$};
\draw (3*\x,0*\y) node{\textbullet};
\draw (4*\x,0*\y) node{$-c$};

\draw (0*\x,1*\y) node{$+(r-2)c$};
\draw (1*\x,1*\y) node{$+(r-2)c$};
\draw (2*\x,1*\y) node{$\cdots$};
\draw (3*\x,1*\y) node{$+(r-2)c$};
\draw (4*\x,1*\y) node{\textbullet};

\draw (0*\x,2*\y) node{$-c$};
\draw (1*\x,2*\y) node{$-c$};
\draw (2*\x,2*\y) node{$\cdots$};
\draw (3*\x,2*\y) node{$-c$};
\draw (4*\x,2*\y) node{$+(r-2)c$};

\draw (0*\x,3*\y) node{$-c$};
\draw (1*\x,3*\y) node{$-c$};
\draw (2*\x,3*\y) node{$\cdots$};
\draw (3*\x,3*\y) node{$-c$};
\draw (4*\x,3*\y) node{$-c$};

\draw (0*\x,4*\y) node{$\vdots$};
\draw (1*\x,4*\y) node{$\vdots$};
\draw (2*\x,4*\y) node{$\ddots$};
\draw (3*\x,4*\y) node{$\vdots$};
\draw (4*\x,4*\y) node{$\vdots$};

\draw (0*\x,5*\y) node{$-c$};
\draw (1*\x,5*\y) node{$-c$};
\draw (2*\x,5*\y) node{$\cdots$};
\draw (3*\x,5*\y) node{$-c$};
\draw (4*\x,5*\y) node{$-c$};


\end{tikzpicture}
\caption{The adjustment of the sizes of the sets in a uniform blow-up of $\mathcal{H}_r$. Each number corresponds to a vertex of $\mathcal{H}_r$, with the rows corresponding to $L_1$ and the columns corresponding to $L_{r+1}$. The number at a vertex $v$ is the amount in which we adjusted the size of $X_v$, i.e.\ $|X_v| - \frac{n}{r^2-r}$.}
\label{planefigure}
\end{figure}

First note that $\sum_{v\in V}|X_v|=n$ (since each column sums to 0 in Figure \ref{planefigure}).

Now we check the minimum degree condition.  Let $v\in V(\mathcal{H}_r)$, let $\ell_v\in L_{r+1}$ such that $v\in \ell_v$, and let $u\in X_v$.  We have (see Figure \ref{planefigure} in which each vertex is adjacent to everything except the distinct members of its own column)
\begin{align*}
d(u)&=(n-1)-\sum_{w\in \ell_v\setminus \{v\}}|X_w|\\ 
&= \begin{cases}
(n-1)-(r-3)(\frac{n}{r^2-r}-c)-(\frac{n}{r^2-r}+(r-2)c)    ,& \text{ if } v \in V(\mathcal{H}_r)\setminus A ,\\
(n-1)-(r-2)(\frac{n}{r^2-r}-c) ,& \text{ if } v \in A,
\end{cases}
\end{align*}
and thus $\delta(G) = (n-1)-(r-2)\frac{n}{r^2-r}-c= (1-\frac{r-2}{r^2-r})n - c-1$.

Finally we check that every monochromatic component of $G$, which corresponds to an edge $\ell$ from $\mathcal{H}_r$, has order at most $\frac{n}{r-1}-c$. Since $c \leq \frac{n}{r(r-1)((r-1)(r-2)+1)}$, we have $(r-1)\left(\frac{n}{r^2-r}+(r-2)c\right)\leq \frac{n}{r-1}-c$ and thus we need only consider the edges $\ell$ of $\mathcal{H}_r$ of order $r$; that is, when $\ell \cap S = \emptyset$. Since every edge from $\mathcal{H}_r$ of order $r$ intersects $A$ in at most one vertex, the order of the largest monochromatic component in $G$ will be at most 
\[
(r-1)\left(\frac{n}{r^2-r}-c\right)+\frac{n}{r^2-r}+(r-2)c=\frac{n}{r-1}-c.
\]
\end{proof}

\section{When an affine plane of order $r$ does not exist}

It is known that there is no affine plane of order 6, so $r=6$ is the first case for which the construction of the previous section does not apply.  An example of a graph $G$ with $\delta(G)=\frac{5n}{7}-1$ such that $\mc_6(G)<\frac{n}{5}$ (in fact, $\mc_6(G)\leq \frac{n}{7}<\frac{n}{5}$) is a uniform blow-up of an affine plane of order 7 with two parallel classes removed.  

\begin{problem}
Construct an example of a graph $G$ with large minimum degree such that $\mc_6(G)<\frac{n}{5}$.  In particular, for some $\alpha>0$ and all $n$, construct a graph $G$ on $n$ vertices with $\delta(G)\geq (\frac{5}{7}+\alpha)n$ such that $\mc_6(G)<\frac{n}{5}$.  
\end{problem}

In light of Section \ref{sec:perturb}, it would suffice to construct a hypergraph $H=(V,E)$ with edge chromatic number 6, proportional rank at most $\frac{1}{5}$, $\delta^*(H)>\frac{5}{7}|V|$ such that if the proportional rank of $H$ is equal to $\frac{1}{5}$, then the top-level of $H$ with respect to the uniform weight assignment (the edges of maximum rank) has no perfect fractional matching.

In general, when an affine plane of order $r$ does not exist, trying to produce an example of an $r$-colored graph $G$ with large minimum degree for which $\mc_r(G)<\frac{n}{r-1}$ leads us back to the original problem for complete graphs.  The purpose of this section is mostly to collect what is known in one place and make a few observations.  These observations have consequences for the original problem for complete graphs and may be useful for extending our construction in the case when an affine plane of order $r$ does not exist.  

Recall that Gy\'arf\'as \cite{Gy} proved $\mc_r(K_n)\geq \frac{n}{r-1}$ and this is best possible when $(r-1)^2$ divides $n$ and an affine plane of order $r-1$ exists.  For all $r$ such that affine plane of order $r-1$ does not exist, the problem of determining $\mc_r(K_n)$ (even asymptotically) is still open.  The following result of F\"uredi shows that one can improve the lower bound on $\mc_r(K_n)$ when there is no affine plane of order $r-1$ (note that the upper bound comes from the construction mentioned in the introduction).

\begin{theorem}[F\"uredi \cite{F1}]\label{furedi}
Let $r\geq 3$ be an integer, let $q$ be the largest integer at most $r-1$ such that there exists an affine plane of order $q$, and let $n\geq q^2$ be an integer.  If an affine plane of order $r-1$ does not exist, then $\frac{n}{r-1-\frac{1}{r-1}}\leq \mc_r(K_n)\leq \ceiling{\frac{n}{q}}$.
\end{theorem}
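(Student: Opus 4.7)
The plan is to handle the two bounds separately. For the upper bound $\mc_r(K_n) \leq \lceil n/q \rceil$, I would take the uniform blow-up of an affine plane $\mathcal{G}_q$ of order $q$: each of the $q^2$ points becomes a set of $\lceil n/q^2 \rceil$ or $\lfloor n/q^2 \rfloor$ vertices so that the total is $n$, and the resulting $K_n$ is colored by the $(q+1)$-parallel-class coloring of $\mathcal{G}_q$. Since $q+1 \leq r$, this may be viewed as an $r$-coloring with $r - q - 1$ empty colors. Each monochromatic component corresponds to the $q$ points on a line of $\mathcal{G}_q$, so has at most $q \cdot \lceil n/q^2 \rceil \leq \lceil n/q \rceil$ vertices.

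For the lower bound, consider an arbitrary $r$-coloring of $K_n$ with $m$ the maximum monochromatic component size, and form the hypergraph $H = (V(K_n), E)$ whose hyperedges are the vertex sets of monochromatic components (using singletons for vertices isolated in a color class). Then $H$ has three key properties: \emph{(i)} each color class induces a partition of $V$, so the edges of $H$ split into $r$ matchings; \emph{(ii)} every vertex lies in exactly $r$ hyperedges; \emph{(iii)} every pair of vertices lies in some hyperedge (the component of the edge's color). From \emph{(i)} and \emph{(ii)}, the assignment $m(e)=1/r$ is a perfect fractional matching, so combined with $\sum_e|e|=rn$ and $|e|\leq m$, we obtain $\nu^*(H)\geq |E(H)|/r \geq n/m$, and equivalently $m\geq n/\nu^*(H)$ by LP duality.

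The substantive task is then to show $\nu^*(H)\leq r-1-\tfrac{1}{r-1}$ whenever no affine plane of order $r-1$ exists. The baseline bound $\nu^*(H)\leq r-1$ is Gyárfás's theorem re-expressed via LP duality; its extremal case $\nu^*(H)=r-1$ would be shown, using complementary slackness together with properties \emph{(i)}--\emph{(iii)}, to force $H$ to be a blow-up of an affine plane of order $r-1$ on $(r-1)^2$ equivalence classes of twin vertices. The quantitative improvement by $\tfrac{1}{r-1}$ would then come from a rigidity step: if $\nu^*(H)>r-1-\tfrac{1}{r-1}$, one can identify twins and extract an honest affine plane of order $r-1$, contradicting the hypothesis. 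The main obstacle is this extraction step, where one must match the precise gap $\tfrac{1}{r-1}$ to the design-theoretic constraints coming from \emph{(i)}--\emph{(iii)}; pinning down this gap (as opposed to getting some weaker quantitative estimate) is the substantive content of Füredi's original argument, and is where I would expect most of the work to lie.
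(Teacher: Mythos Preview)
The paper does not prove this theorem: it is quoted from F\"uredi~\cite{F1}, with only the parenthetical remark that ``the upper bound comes from the construction mentioned in the introduction.'' So there is no in-paper argument to compare your proposal against; what you have written is essentially a reconstruction of F\"uredi's original approach, and in outline it is the correct one---form the hypergraph $H$ of monochromatic components, observe that the largest edge has size at least $n/\tau^*(H)$, and then bound $\tau^*(H)=\nu^*(H)$ above by $r-1-\tfrac{1}{r-1}$ when no affine plane of order $r-1$ exists.

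Two remarks on the details. First, in your upper-bound paragraph the inequality $q\cdot\lceil n/q^2\rceil \le \lceil n/q\rceil$ is false in general (take $q=3$, $n=10$: the left side is $6$, the right side is $4$); to actually realise the bound $\lceil n/q\rceil$ one has to distribute the blow-up sizes among the $q^2$ points more carefully than simply bounding every set by $\lceil n/q^2\rceil$. Second, for the lower bound you correctly identify that the entire content lies in pinning down the exact gap $\tfrac{1}{r-1}$ and extracting an affine plane of order $r-1$ from a hypergraph with $\tau^*$ close to $r-1$. Your complementary-slackness/twin-identification outline is the right intuition, but as written it defers precisely the substantive step to F\"uredi rather than supplying it, so the proposal is a correct roadmap rather than a proof.
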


Since an affine plane is a hypergraph in which every pair of distinct vertices is contained in exactly one edge and the edges of the hypergraph can be decomposed into perfect matchings (and thus has the smallest possible edge chromatic number), a natural place to look for examples which improve the upper bound (when an affine plane of order $r-1$ does not exist) are resolvable balanced incomplete block designs. 

A \emph{$(v,k,1)$-resolvable balanced incomplete block design}, a \emph{$(v,k,1)$-RBIBD} for short, is a $k$-uniform hypergraph $H$ on $v$ vertices such that each pair of vertices is contained in exactly one edge and the edges of $H$ can be decomposed into $\frac{\binom{v}{2}/\binom{k}{2}}{v/k}=\frac{v-1}{k-1}$ perfect matchings.  A necessary condition for the existence of a $(v,k,1)$-RBIBD is that $v\equiv k\bmod k(k-1)$.  Ray-Chaudhuri and Wilson \cite{RW} proved that for all $k\geq 3$ there exists a constant $C(k)$ such that if $v\geq  C(k)$ and $v\equiv k\bmod k(k-1)$, then a $(v,k,1)$-RBIBD exists.  Later Chang \cite{yC} proved that $C(k) = \exp(\exp(k^{12k^2}))$ suffices. 
There are some other sporadic results for small $k$ (see \cite{AGY}), but in general, the existence of $(v,k,1)$-RBIBDs is open.

Note that an affine plane of order $k$ is a $(k^2,k,1)$-RBIBD and by the necessary condition above, $k^2$ is the smallest $v$ for which a non-trivial $(v,k,1)$-RBIBD exists.  Because of this, we parameterize $v$ in terms of $k$ and a non-negative integer $t$, and speak of $(k^2+tk(k-1),k,1)$-RBIBDs.

Given a hypergraph $H$, let $v(H)$ be the number of vertices in $H$ and recall that $r(H)$ is the rank of $H$.

\begin{fact}\label{RBIBD}
Let $k\geq 2$, $t\geq 0$, and $n\geq k^2+tk(k-1)$ be integers such that $k^2+tk(k-1)$ divides $n$.  If there exists a $(k^2+tk(k-1),k,1)$-RBIBD, then there is a $((t+1)k+1)$-coloring of $K_n$ such that every monochromatic component has order at most $\frac{n}{(t+1)k-t}$.  

In particular, when $t=0$ this means that if there exists an affine plane of order $k$, then there exists $(k+1)$-coloring of $K_n$ such that every monochromatic component has order at most $\frac{n}{k}$.
\end{fact}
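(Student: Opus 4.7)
The plan is to build the desired coloring as a uniform blow-up of the given RBIBD, which is essentially the same trick used for affine planes in the introduction (the special case $t=0$). The key observation driving the argument is that the parameters are engineered to line up: a $(v,k,1)$-RBIBD with $v = k^2 + tk(k-1)$ has exactly $\frac{v-1}{k-1} = \frac{(k-1)(k+1)+tk(k-1)}{k-1} = (t+1)k+1$ parallel classes, matching the number of colors we are allowed, and $k\cdot(n/v) = \frac{n}{k+t(k-1)} = \frac{n}{(t+1)k-t}$, matching the desired component size.

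First I would unpack the design. Let $\mathcal{B}=(V,E)$ be a $(v,k,1)$-RBIBD on $v:=k^2+tk(k-1)$ points with parallel classes $\mathcal{P}_1,\dots,\mathcal{P}_{(t+1)k+1}$; each $\mathcal{P}_c$ is a partition of $V$ into $v/k$ blocks of size $k$. Since $v\mid n$, I would realize $K_n$ as a blow-up of $\mathcal{B}$: for each $x \in V$ create a set $X_x$ with $|X_x| = n/v$, and set $V(K_n) = \bigsqcup_{x \in V} X_x$.

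The coloring is then defined in the natural way. For an edge $uv$ of $K_n$ with $u \in X_x$ and $v \in X_y$ where $x \neq y$, there is a unique block of $\mathcal{B}$ containing $\{x,y\}$, and hence a unique parallel class $\mathcal{P}_c$ containing that block; assign $uv$ the color $c$. For the remaining edges, namely those lying inside a single $X_x$, assign an arbitrary fixed color (say color $1$).

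It then remains to verify the component bound. Fix a color $c$ and a block $B \in \mathcal{P}_c$. The set $U_B := \bigcup_{x \in B} X_x$ is connected in color $c$ because all cross-edges between distinct parts $X_x, X_y$ with $x,y\in B$ receive color $c$. Conversely, a vertex of $X_x$ has no color-$c$ edge to any $X_y$ with $y$ outside the block of $\mathcal{P}_c$ containing $x$, since such an edge would carry the color of a different parallel class. Hence the monochromatic components in color $c$ are exactly the sets $U_B$ for $B \in \mathcal{P}_c$, each of size $k\cdot(n/v) = \frac{n}{(t+1)k-t}$; the choice of color for internal edges of each $X_x$ does not matter, because $X_x$ already sits inside the color-$c$ component $U_B$ via cross-edges to the other sets in its block. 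There is no substantive obstacle; the only content is noticing that the RBIBD hypotheses are designed precisely so that this blow-up argument gives all three counts (colors, components, sizes) on the nose.
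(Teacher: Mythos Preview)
Your proof is correct and follows exactly the paper's approach: compute that the RBIBD has $(t+1)k+1$ parallel classes and proportional rank $\frac{1}{(t+1)k-t}$, then take a uniform blow-up. You have simply spelled out in more detail what the paper compresses into two sentences, including the harmless handling of intra-part edges.
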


\begin{proof}
This follows from the fact that the proportional rank of a $(k^2+tk(k-1),k,1)$-RBIBD is $$\frac{k}{k^2+tk(k-1)}=\frac{1}{(t+1)k-t}$$ and a $(k^2+tk(k-1),k,1)$-RBIBD has $$\frac{\binom{k^2+tk(k-1)}{2}/\binom{k}{2}}{(k^2+tk(k-1))/k}=\frac{k^2+tk(k-1)-1}{k-1}=(t+1)k+1$$ parallel classes. Taking a uniform blow-up gives the desired conclusion.
\end{proof}

The point of Fact \ref{RBIBD} is that, for instance when $r=23$, Theorem \ref{furedi} implies that $\frac{n}{22-1/22}\leq \mc_{23}(K_n)\leq \frac{n}{19}$ (if no affine plane of order 20, 21, or 22 exists).  But by Fact \ref{RBIBD}, if a $(231,11,1)$-RBIBD exists ($k=11$, $t=1$), then $\frac{n}{22-1/22}\leq \mc_{23}(K_n)\leq \frac{n}{21}$.

Also note that for $r=7$, Theorem \ref{furedi} implies that $\frac{6n}{35}\leq \mc_7(K_n)\leq \frac{7n}{35}=\frac{n}{5}$.  It is well known that a $(15, 3, 1)$-RBIBD exists; this is the original Kirkman schoolgirls problem (in fact, four out of the 80 Steiner triple systems on 15 vertices are resolvable -- see \cite{MPR}).  So Fact \ref{RBIBD} implies that there are at least four other examples which show that $\mc_7(K_n)\leq \frac{n}{5}$.

%

%

\section{Conclusion}

The main open problem is to prove an analogue of Theorem \ref{GuggiariScott} for $r\geq 4$ colors (the lack of additional evidence prevents us from calling it a conjecture).  Note that the following is true for $r=2,3$.

\begin{problem}
Let $n$ and $r\geq 2$ be positive integers. Prove that if $G$ is a graph on $n$ vertices with $\delta(G)\geq (1-\frac{r-2}{r^2-r})n-1$ and an affine plane of order $r$ exists, then $\mc_r(G)\geq \frac{n}{r-1}$.
\end{problem}

When $r=7$, Theorem \ref{main_example} says that when $42|n$, there exists a graph $G$ on $n$ vertices with $\delta(G)= (1-\frac{5}{42})n-2$, such that $\mc_7(G)<\frac{n}{6}$.  However, Theorem \ref{furedi} says $\mc_7(K_n)\geq \frac{6n}{35}>\frac{n}{6}$ (and it is even conceivable that $\mc_7(K_n)=\frac{n}{5}$).

So we can ask a modified version of the original question of Gy\'arf\'as and S\'ark\"ozy which is different whenever an affine plane of order $r-1$ does not exist.

\begin{problem}
If $G$ is a graph on $n$ vertices, how large does the minimum degree of $G$ need to be so that $\mc_r(G)=\mc_r(K_n)$?
\end{problem}

\end{document}